\newtheorem{theorem}{Theorem}
\newtheorem{corollary}[theorem]{Corollary}
\newtheorem{definition}[theorem]{Definition}
\newtheorem{remark}[theorem]{Remark}
\newtheorem{proposition}[theorem]{Proposition}
\newenvironment{proof}[1][Proof]{\textbf{#1.} }{\ \rule{0.5em}{0.5em}}
\newcommand{\F}{\mathbb{F}_q}
\begin{document}

\title{On the Deuring Polynomial for Drinfeld Modules in Legendre Form}
\author{Alp Bassa\footnote{Alp Bassa is supported by the BAGEP Award of the Science Academy with funding supplied by Mehve{\c s} Demiren in memory of Selim Demiren.} \quad and\quad  Peter Beelen\footnote{Peter Beelen is supported by The Danish Council for Independent Research (Grant No. DFF--4002-00367).}}
\date{}
\maketitle
\abstract{We study a family $\psi^{\lambda}$ of $\mathbb F_q[T]$-Drinfeld modules, which is a natural analog of Legendre elliptic curves. We then find a surprising recurrence giving the corresponding Deuring polynomial $H_{p(T)}(\lambda)$ characterising supersingular Legendre Drinfeld modules $\psi^{\lambda}$ in characteristic $p(T)$.}

\section{Introduction}

In this paper we motivate the definition of a family of Drinfeld modules introduced in 2011 in a preliminary version of this work \cite{unred} and named Legendre Drinfeld modules by El-Guindy in \cite{elguindy}. We investigate an analog of the Deuring polynomial for these Legendre Drinfeld modules. We show how these polynomials can be obtained very easily using a simple recurrence.

More precisely, let $A=\mathbb F_q[T]$ and let $L$ be an $A$-field with the structure homomorphism $\gamma:A\to L$. We assume that $L$ has $A$-characteristic ${\rm ker\ }\gamma=\langle p(T) \rangle$, with $p(T)\neq T$ and $\deg p(T)=d$.

An $\mathbb F_q[T]$-Drinfeld module $\psi^{\lambda}$ in Legendre form is given by
\begin{equation*}  \psi^{\lambda}_T=\frac{\gamma(T)}{(\lambda^q-\lambda)^{q-1}}\tau^2-\Bigl(\frac{\gamma(T)}{(\lambda^q-\lambda)^{q-1}}+\gamma(T)\Bigr)\tau+\gamma(T)\end{equation*}
with $\lambda \notin \mathbb F_q$. It can been seen that every Drinfeld module is isomorphic to a Drinfeld module in Legendre form.
The Deuring polynomial $H_{p(T)}$ for the prime $p(T)$ is defined as
$$H_{p(T)}(s)=\prod(s-\lambda),$$
where the product is taken over all $\lambda \in \overline{L}$ such that $\psi^{\lambda}$ is supersingular. $H_{p(T)}(s)$ characterizes the supersingular $\lambda$-invariants.

We show that $H_{p(T)}(s)$ can be obtained easily using a recurrence:
\begin{theorem}
For $i\geq -1$ we define the polynomials $U_i(s)\in A[s]$  recursively by:
$$U_{-1}(s)=0  \makebox{, } U_0(s)=1$$
and
\begin{equation*}
U_{i+1}(s)=\bigl((s^q-s)^{q-1}+\frac{1}{T^{q-1}}\bigr)^{q^i}U_i(s)-\frac{T^{q^i}-T}{T^{q^{i+1}}}(s^q-s)^{(q-1)q^{i-1}}U_{i-1}(s),\quad \text{for } i\ge 0.
\end{equation*}
Let $p(T)\neq T$ be an irreducible polynomial in $A$ with $\deg p(T)=d$. Then
$$H_{p(T)}(s)=U_d^{(p(T))}(s),$$
where $U_d^{(p(T))}(s)$ denotes the reduction of $U_d(s)$ modulo $p(T)$.
\end{theorem}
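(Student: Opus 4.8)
The plan is to reduce the identity to a single coefficient of $\psi^{\lambda}_{p(T)}$ and then to recognise that coefficient as the reduction of $U_d$. Throughout I write $\psi^{\lambda}_T=\Delta\tau^2+g\tau+\gamma(T)$ with $\Delta=\gamma(T)/(\lambda^q-\lambda)^{q-1}$ and $g=-(\Delta+\gamma(T))$, and I record the factorisation $\psi^{\lambda}_T=(\Delta\tau-\gamma(T))(\tau-1)$, which is the structural feature of Legendre form that I expect to drive the recursion; setting $D=(s^q-s)^{q-1}$ we have $\Delta=\gamma(T)/D$, and the excluded locus $\lambda\in\mathbb{F}_q$ is exactly $D=0$. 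The first step is the supersingularity criterion. Since $\psi^{\lambda}$ has rank $2$ its height $h$ lies in $\{1,2\}$ and $v_{\tau}(\psi^{\lambda}_{p(T)})=hd$; as $h\ge 1$ always holds in $A$-characteristic $p(T)$, the coefficients of $\tau^0,\dots,\tau^{d-1}$ in $\psi^{\lambda}_{p(T)}$ vanish automatically, and $\psi^{\lambda}$ is supersingular (i.e.\ $h=2$) precisely when the coefficient of $\tau^{d}$, the Hasse invariant $h_p(\lambda)$, vanishes. Hence the zero set of $h_p$ is exactly the supersingular locus, and it remains to identify $h_p$, up to a factor that is nonvanishing on $D\neq 0$, with $U_d^{(p(T))}(s)$.

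The heart of the argument is a recursion for $h_p$. Writing $\psi^{\lambda}_{T^{i}}=\sum_k c_{i,k}\tau^k$, the relation $\psi^{\lambda}_{T^{i+1}}=\psi^{\lambda}_T\,\psi^{\lambda}_{T^{i}}$ combined with $\deg_{\tau}\psi^{\lambda}_T=2$ yields $c_{i+1,m}=\Delta c_{i,m-2}^{q^2}+g\,c_{i,m-1}^{q}+\gamma(T)\,c_{i,m}$; the Frobenius twists here are exactly what generate the powers $\gamma(T)^{q^j}$, equivalently the symbols $T^{q^j}$, that appear in the stated recursion. I would work universally, with $T$ an indeterminate and $\gamma$ injective, and form a suitably normalised combination of the relevant $c_{i,k}$ so that the degree-two shape of $\psi^{\lambda}_T$ collapses this two-index recursion into a three-term recursion in $i$ alone; direct computation at $i=0,1$ should then force this normalised quantity to be $U_i(s)$, consistently with $U_1=D+T^{1-q}$ (whose leading term $D$ matches the top term $D^{(q^d-1)/(q-1)}$ of $U_d$).

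The content of the reduction $H_{p(T)}=U_d^{(p(T))}$ is then that, modulo $p(T)$, the symbols $T^{q^j}$ become the Frobenius conjugates of $\gamma(T)$ (the roots of $p$), so that $T^{q^d}\equiv T$ closes the recursion after exactly $d$ steps, and the universally computed $U_d$, reduced mod $p(T)$, reproduces the characteristic-$p(T)$ Hasse invariant $h_p$ for \emph{every} $p$. The main obstacle is exactly this point: pinning down the normalisation that converts the two-index coefficient recursion into the stated three-term recursion, and proving that its universal solution reduces correctly for every $p$. This is more than the single congruence $T^{q^d}\equiv T$: it requires that all intermediate conjugacy and trace/norm relations among $T,T^q,\dots,T^{q^{d-1}}$ modulo $p$ conspire to reproduce the $p$-dependent lower-order contributions to $h_p$ (for instance, when $d=2$, the term $p_1\Delta$ arising from $\psi^{\lambda}_{p(T)}=\psi^{\lambda}_{T^{2}}+p_1\psi^{\lambda}_{T}+p_0$, rather than just $\psi^{\lambda}_{T^{2}}$). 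I would check this either by a clean invariant description of the normalised coefficient or by inducting on the recursion while carrying the conjugate symbols $T^{q^j}$ explicitly.

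Finally, to upgrade the set-theoretic equality to the polynomial identity $H_{p(T)}=U_d^{(p(T))}$ I would verify three things. First, $U_d$ is monic in $s$ of the expected degree $q^{d+1}-q$, since its leading term is $D^{(q^d-1)/(q-1)}$ and $D$ is monic in $s$; reduction mod $p(T)$ preserves monicity and degree. Second, $U_d^{(p(T))}$ does not vanish at $D=0$: from the recursion $U_i|_{D=0}=T^{(1-q)q^{i-1}}U_{i-1}|_{D=0}$, so $U_i|_{D=0}$ is a unit power of $T$, nonzero mod $p(T)$ because $p(T)\neq T$; thus no spurious $\lambda\in\mathbb{F}_q$ is counted, matching the Legendre constraint $\lambda\notin\mathbb{F}_q$. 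Third, $U_d^{(p(T))}$ is separable, so that every supersingular $\lambda$ occurs with multiplicity one and the monic polynomial equals $\prod(s-\lambda)$; here the three-term structure is useful, since continuant-type arguments control $\gcd\bigl(U_d^{(p(T))},U_{d-1}^{(p(T))}\bigr)$ on the locus $D\neq 0$ and hence bound the multiplicity of the roots.
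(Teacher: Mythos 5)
Your setup (supersingularity $\Leftrightarrow$ vanishing of the coefficient of $\tau^d$ in $\psi^{\lambda}_{p(T)}$, and the final bookkeeping on monicity, nonvanishing at $s\in\mathbb F_q$, and separability) is sound and consistent with the paper. But the core of your argument is missing, and you say so yourself: you never produce the ``suitably normalised combination'' of the coefficients $c_{i,k}$ that is supposed to satisfy the stated three-term recursion, nor do you prove that its universal solution reduces to the Hasse invariant modulo \emph{every} prime $p(T)$ of degree $d$. This is not a routine verification that can be deferred. The recursion coming from $\psi^{\lambda}_{T^{i+1}}=\psi^{\lambda}_T\psi^{\lambda}_{T^i}$ (equivalently the paper's recurrence~\eqref{recurrence}) applies Frobenius twists to the unknown coefficients themselves ($c_{i,m-2}^{q^2}$, $c_{i,m-1}^{q}$), whereas in the target recursion for $U_i$ the Frobenius powers $q^i$ sit only on the coefficient polynomials and $U_i,U_{i-1}$ appear untwisted; these two recursions have genuinely different structure, and no normalisation by a monomial factor will convert one into the other in an obvious way. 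Moreover $\psi^{\lambda}_{p(T)}=\sum_j p_j\,\psi^{\lambda}_{T^j}$ mixes contributions from all $j$ with weights given by the coefficients of $p(T)$, so the claim that the coefficient of $\tau^d$ depends, modulo $p(T)$, only on $d$ is precisely the theorem to be proved; asserting that the ``conjugacy and trace/norm relations among $T,T^q,\dots,T^{q^{d-1}}$ conspire'' correctly is a restatement of the goal, not an argument.

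For comparison, the paper does not attempt this direct identification at all. It first proves the $\Delta$-line version $h_{p(T)}=u_d^{(p(T))}$ (Theorem~\ref{thm:main}) by a rigidity argument: the correspondence \eqref{correspondence} coming from $X_0(T^2)$ defines a graph $G$ on the places of $L(\Delta_0)$; the supersingular points form a finite $q$-regular component (Theorem~\ref{thm:modularcomponent}); the identity of Proposition~\ref{prop:keyresult} shows that the roots of $u_d^{(p(T))}$ also form a finite $q$-regular component; and \cite[Corollary 4.7]{beelen} guarantees there is at most one such component, forcing the two root sets to coincide. Monicity and separability (Proposition~\ref{prop:simple}) then upgrade this to a polynomial identity, and the statement for $H_{p(T)}$ and $U_d$ follows by the change of variables \eqref{deuringrelation}. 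If you want to salvage your direct approach, you would need to actually establish the three-term recursion for the normalised Hasse invariant and its universality over all primes of a given degree --- which is exactly the step the paper's correspondence-plus-graph argument is designed to avoid.
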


Note that $H_{p(T)}(s)$ is obtained by reduction of $U_d(s)$, which depends on the degree of $p(T)$ only! Polynomials with such properties are called ``universal supersingular polynomials".

Universal supersingular invariants at the level of the $j$-line have previously been investigated by Cornelissen \cite{cor1,cor2}, El-Guindy--Papanikolas \cite{elgpapa}, El-Guindy \cite{elguindy}, Gekeler \cite{ge4, ge5} and Schweizer \cite{schweizer2}.

Alternatively, one can consider Drinfeld modules at a higher level, which we previously have called the $u$-line in \cite{unred}, and which are weak analogs of Legendre elliptic curves.
For $\Delta\neq 0$ we consider the Drinfeld module
\begin{equation*} \psi^{\Delta}_T=\Delta\tau^2-(\Delta+\gamma(T))\tau+\gamma(T)
\end{equation*}
As shown below, the $\Delta$-line corresponds to the modular curve $X_0(T)$ and forms a degree $q+1$ cover of the $j$-line $X(1)$. An analogous Deuring polynomial $h_{p(T)}(s)$ characterising supersingular $\Delta$-invariants in $A$-characteristic $p(T)$ can be defined as well and we show below that it is obtained similarly as the reduction of recursively given polynomials $u_i(s)$ modulo $p(T)$, see Theorem~\ref{thm:main}.

Classically, for an elliptic curve $E_\lambda/\mathbb C$ in Legendre form the integrals giving the periods of the corresponding lattice can be expressed in terms of  the hypergeometric function $_2F_1(1/2,1/2,1;s)$. The Deuring polynomial, characterising $\lambda$-invariants of supersingular Legendre elliptic curves in characteristic $p$ is obtained as the truncation of the expression for the real period in terms of $_2F_1(1/2,1/2,1;s)$ (by reducing it modulo $p$). El-Guindy \cite{elguindy} has obtained formulas for periods of Legendre Drinfeld modules and has shown, using the characterisation of the Deuring polynomial $h_{p(T)}(s)$ given in a preliminary version \cite{unred} of this paper, that their truncations precisely gives the Deuring polynomials $h_{p(T)}(s)$.

Note that in \cite{ge3} Gekeler gives and studies an analog of the Deuring polynomial for the family of Drinfeld modules given by $\phi_T=\tau^2+\lambda\tau+T$. As for the $\Delta$-family above, the $\lambda$-line in \cite{ge3} is a degree $q+1$ covering of the $j$-line, given however by the relation $j=\lambda^{q+1}$. The corresponding Deuring polynomial also behaves similarly as its classical counterpart. However the analogy with the classical case is not as strong.

The results in this work have appeared in a preliminary arXiv version \cite{unred} in 2011. Since the aim in \cite{unred} was the construction of asymptotically optimal towers of function fields, results on the Deuring polynomial were hidden in the exposition and their proof depended on the optimality of the towers. We give an exposition independent of towers and a self-contained proof using a result about correspondences from \cite{beelen}. We would like to thank Ahmad El-Guindy for pointing out that these results are of independent interest.

\section{Drinfeld modules in Legendre form}

To motivate the definition of Legendre Drinfeld modules, we start be recalling the situation in the case of elliptic curves. For details see \cite{sil}. Let $k$ be a field with ${\rm char}\ k \ne 2$. Let $E$ be an elliptic curve over $\overline{k}$, given by the equation  $y^2=(x-e_1)(x-e_2)(x-e_3)$, with the $e_i$ distinct. The $2$-torsion points are given by $(e_1,0),(e_2,0), (e_3,0)$ and $\infty$. The admissible change of variables
$$x=(e_2-e_1)x'+e_1,\quad y=(e_2-e_1)^{3/2}y'$$ maps the ordered torsion points respectively on $(0,0)$, $(1,0)$, $(\lambda,0)$ and $\infty$ and gives the isomorphic elliptic curve $E'$ in Legendre form
\begin{equation}\label{legendre} y^2=x(x-1)(x-\lambda),\text{ with } \lambda=\frac{e_3-e_1}{e_2-e_1}.\end{equation}
Hence for every elliptic curve, we can find an isomorphic elliptic curve in Legendre form, so that the isomorphism maps two chosen nontrivial $2$-torsion points $(e_1,0)$ and $(e_2,0)$ on the $2$-torsion points $(0,0)$ and $(1,0)$, respectively.
For an elliptic curve $E$ the ordering on the  $2$-torsion points $(e_1,0),(e_2,0), (e_3,0)$ corresponds to a choice of ordered  basis $(e_1,0),(e_2,0)$ for the $\mathbb Z/2\mathbb Z$-vector space $E[2]$, and the basis vectors are mapped on $(0,0)$ and $(1,0)$ respectively by the change of variable. Hence we can see $\lambda$ as a uniformizer for the modular curve $X(2)$. The $j$-invariant of the Legendre Elliptic Curve (\ref{legendre}) is given by
$$j(\lambda)=2^8\frac{(\lambda^2-\lambda+1)^3}{\lambda^2(\lambda-1)^2}.$$
This gives the relation between $\lambda$ and the function $j$ on $X(1)$.
We obtain a surjective map $\overline{k}\backslash\{0,1\}\to \overline{k}\quad \lambda\mapsto j(\lambda)$, which is six-to-one, except above $j=0$ and $j=1728$.

Suppose $k$ is a field of positive characteristic $p\neq 2$. The elliptic curve $E$ given by Equation~\eqref{legendre} is supersingular, if and only if the coefficient of $x^{p-1}$ in $\bigl(x(x-1)(x-\lambda)\bigr)^{(p-1)/2}$ is zero. Let $m=(p-1)/2$ and define the Deuring polynomial
$$H_p(s)=\sum_{i=0}^m \binom{m}{i}^2s^i.$$ The coefficient of $x^{p-1}$ in $\bigl(x(x-1)(x-\lambda)\bigr)^{(p-1)/2}$ is given up to a sign by $H_p(\lambda)$. Therefore for $\lambda\neq 0,1$, the elliptic curve $y^2=x(x-1)(x-\lambda)$ is supersingular if and only if $H_p(\lambda)=0$.

The analogous picture for Drinfeld modules looks as follows: Let $A=\mathbb F_q[T]$ and let $L$ be an $A$-field with the structure homomorphism $\gamma:A\to L$. The role of the prime $2$ above will be played by the prime $T$, so we assume throughout that $L$ has $A$-characteristic ${\rm ker\ }\gamma=\langle p(T) \rangle\neq \langle T\rangle$, with $\deg p(T)=d$. Let $\varphi$ be a Drinfeld $A$-module defined over $\overline{L}$ of rank $2$. Let $e_1,e_2$ be an $\mathbb F_q[T]/\langle T \rangle$ basis for the $T$-torsion $\varphi[T]$. As above we specify an isomorphism using the torsion point $e_1$:  consider the isomorphic Drinfeld module $\psi$, with an isomorphism from $\psi$ to $\varphi$ given by the degree zero additive polynomial $e_1=e_1 \tau^0$:
$$e_1 \cdot \psi_T=\varphi_T\cdot e_1.$$
The $T$-torsion point $e_1$ of $\varphi$ is mapped onto $T$-torsion point $1$ of $\psi$ and if
$$\psi_T=\Delta\tau^2+g\tau+\gamma(T),\quad \Delta\neq 0$$
then $\psi_T(1)=\Delta+g+\gamma(T)=0$, so $g=-(\Delta+\gamma(T))$ and
\begin{equation}\label{uline} \psi_T=\Delta\tau^2-(\Delta+\gamma(T))\tau+\gamma(T)=(\Delta \tau-\gamma(T))(\tau-1)
\end{equation}
Note that the $\mathbb F_q[T]$-submodule $\mathbb F_q\cdot e_1$ of $\varphi[T]$ of rank 1 is mapped onto the submodule $\mathbb F_q$ of $\psi[T]$. Hence $\Delta$ can be seen as a uniformizer for the modular curve $X_0(T)$.

The $T$-torsion point $e_2$ is mapped onto $\lambda\notin \mathbb F_q\cdot 1=\mathbb F_q$. So
$$\psi_T(\lambda)=\Delta \cdot \lambda^{q^2}-(\Delta+\gamma(T))\lambda^q+\gamma(T)\lambda=0.$$
Since $\lambda^q\neq \lambda$, we obtain
\begin{equation} \label{deltalambda} \Delta=\frac{\gamma(T)}{(\lambda^q-\lambda)^{q-1}}.\end{equation}
Hence we see that every Drinfeld module $\varphi$ together with an ordered basis $e_1,e_2$ for the $T$-torsion is isomorphic to a Drinfeld module $\psi$ of the form
\begin{equation} \label{lline} \psi_T=\frac{\gamma(T)}{(\lambda^q-\lambda)^{q-1}}\tau^2-\Bigl(\frac{\gamma(T)}{(\lambda^q-\lambda)^{q-1}}+\gamma(T)\Bigr)\tau+\gamma(T)\end{equation}
The basis $e_1, e_2$ is mapped under this isomorphism onto $1,\lambda$. Hence we can see $\lambda$ as a uniformizer for the modular curve $X(T)$. The $j$-invariant of the Drinfeld module (\ref{lline}) is given by
$$j(\lambda)=\frac{\bigl(\Delta+\gamma(T)\bigr)^{q+1}}{\Delta}=\frac{\gamma(T)^q\bigl(1+(\lambda^q-\lambda)^{q-1}\bigr)^{q+1}}{(\lambda^q-\lambda)^{q^2-q}}.$$
This gives the relation between $\lambda$ and the function $j$ on $X(1)$.
We obtain a surjective map $\overline{L}\backslash\mathbb F_q\to \overline{L}\quad \lambda\mapsto j(\lambda)$, which is $(q^3-q)$-to-one, except above $j=0$.

Next we want to define the analog of the Deuring polynomial. We are looking for a polynomial $H_{p(T)}(s)$ such that the Drinfeld module $\psi^{\lambda}$ with
$$ \psi^{\lambda}_T=\frac{\gamma(T)}{(\lambda^q-\lambda)^{q-1}}\tau^2-\Bigl(\frac{\gamma(T)}{(\lambda^q-\lambda)^{q-1}}+\gamma(T)\Bigr)\tau+\gamma(T)=\Bigl(\frac{\gamma(T)}{(\lambda^q-\lambda)^{q-1}} \tau-\gamma(T)\Bigr)(\tau-1)$$
is supersingular if and only if $H_{p(T)}(\lambda)=0$. Since $L$ has $A$-characteristic $p(T)$, the kernel of the multiplication by $p(T)$ map, $\psi^{\lambda}[p(T)]$, is a free $\mathbb F_q[T]/\langle p(T)\rangle$-module of rank $0$ or $1$. It has rank $0$ (and $\psi^{\lambda}$ is supersingular) exactly when the coefficient of $\tau^d$ in $\psi^{\lambda}_{p(T)}$ is zero, where $d=\deg p(T)$.

Since $\Delta$ and $\lambda$ are related by a simple relation (\ref{deltalambda}), we start by determining supersingular Drinfeld modules of the form (\ref{uline}).
For $\psi_T=\Delta\tau^2-(\Delta+\gamma(T))\tau+\gamma(T)=(\Delta \tau-\gamma(T))(\tau-1)$ let
\begin{equation}\label{eq:psipt}
\psi_{p(T)}=\sum_{k=0}^{2d}g_k(\Delta)\tau^k.
\end{equation}
A direct bookkeeping argument for the degree of $\Delta$ in the coefficient of $\tau^d$ in \eqref{eq:psipt} shows that $g_{d}(\Delta)$ has leading term $(-1)^d \Delta^{(q^d-1)/(q-1)}.$
Therefore $h_{p(T)}(\Delta):=(-1)^d g_d(\Delta)$ is a monic polynomial of degree $(q^d-1)/(q-1)$ in $\Delta.$
Furthermore, we have $g_k(\Delta)=0$ for $0\leq k<d$ and
\begin{equation}\label{eq:g2d}
g_{2d}=\Delta^{1+q^2+q^4+\cdots+q^{2d-2}}.
\end{equation}
By an immediate adaptation of calculations  in \cite{ge3}, we obtain the following results for $h_{p(T)}$: Considering the coefficient of $\tau^k$ in $\psi_{p(T)}\psi_T=\psi_T\psi_{p(T)}$ we obtain the recurrence
\begin{align}\label{recurrence}
\gamma(T^{q^k}-T)\cdot g_k&=g_{k-1}\Delta^{q^{k-1}}-g_{k-2}\Delta^{q^{k-2}}+\bigl(g_{k-2}^{q^2}-g_{k-1}^{q}\bigr)\Delta-\gamma(T)g_{k-1}^{q}+g_{k-1}\gamma(T^{q^{k-1}}) \notag \\
&=g_{k-1}\Omega^{q^{k-1}}-g_{k-1}^q\Omega-\bigl(g_{k-2}\Delta^{q^{k-2}}-g_{k-2}^{q^2}\Delta\bigr)
\end{align}
with $\Omega=\Delta+\gamma(T)$.
It follows that $h_{p(T)}(\Delta)|g_k(\Delta)$ for $d\leq k<2d$. Hence roots of $h_{p(T)}(\Delta)$ are roots of $g_k(\Delta)$ for $0 \le k < 2d$. Therefore, the Drinfeld module in \eqref{uline} is supersingular if and only if $\Delta$ is a root of $h_{p(T)}(\Delta)$. As in \cite{ge3} one can show that $h_{p(T)}(\Delta)$ is separable, by considering equation \eqref{recurrence} for $k=2d+1$.

By \eqref{deltalambda}, the monic polynomial
\begin{equation} \label{deuringrelation}
H_{p(T)}(\lambda):=\Bigl(\dfrac{(\lambda^q-\lambda)^{q-1}}{\gamma(T^q)}\Bigr)^{\frac{q^d-1}{q-1}}h_{p(T)}\Bigl(\frac{\gamma(T)}{(\lambda^q-\lambda)^{q-1}}\Bigr)
\end{equation}
will be the analog of the Deuring Polynomial in $A$-characteristic $p(T)$. It is easily seen to be separable and its roots correspond to supersingular Drinfeld modules in Legendre form (\ref{lline}) in $A$-characteristic $p(T)$.

\section{Equations for the Drinfeld modular curves $X_0(T^n)$}
In this section we find explicit equations for the curves $X_0(T^n)$ over $\mathbb F_q[T,1/T]$ for any $n\geq 1$. Equations for the reductions of these curves modulo $T-1$ (i.e., $\gamma(T)=1$) have been obtained by Elkies in \cite{elkiesd}. The equations derived in this section hold for any $A$-characteristic with $\gamma(T)\neq 0$.

Denote by $\mathbb F$ the field $\mathbb F_q(\gamma(T))$. Let $F_n$ be the function field of $X_0(T^n)$ over $\mathbb F$. We know that $j_1=j$ is a uniformizer of $X(1)$ and hence $F_0=\mathbb F(j_1)$. Moreover $\Delta_0=\Delta$ is a uniformizer for $X_0(T)$: given a Drinfeld module $\varphi$ together with $T$-isogeny $\iota$, we can find an isomorphic Drinfeld module $\psi$ of the form (\ref{uline}), such that the isomorphism maps the kernel of $\iota$ onto $\mathbb F_q$. So the pair $(\varphi,\iota)$ will correspond to the pair $(\psi,\kappa)$, with $\kappa=\tau-1$. Hence $F_1=\mathbb F(\Delta_0)$, with
$$j_1=\frac{\bigl(\Delta_0+\gamma(T)\bigr)^{q+1}}{\Delta_0}.$$
We have the isogeny $\kappa:\psi\to\psi'$. The isogenous Drinfeld module $\psi'$ will be given by
$$\psi'_T=(\tau-1)(\Delta_0\tau-\gamma(T))=\Delta_0^q\tau^2-(\Delta_0+\gamma(T^2))\tau+\gamma(T)$$ and will have $j$ invariant
\begin{equation}\label{eq1}
j_2=\frac{\bigl(\Delta_0+\gamma(T^q)\bigr)^{q+1}}{\Delta_0^q}.
\end{equation}
We have $F_1=\mathbb F(\Delta_0)=\mathbb F(j_1,j_2)$. This description for $X_0(T)$ can also be found in \cite{schweizer}.
By iteration, equations for $F_n$ can be found. Let $\iota'$ be a $T$-isogeny from $\psi'$. The pair $(\psi',\iota')$ will correspond to $\Delta_1$ with
\begin{equation} \label{eq2} j_2=\frac{\bigl(\Delta_1+\gamma(T)\bigr)^{q+1}}{\Delta_1}.\end{equation}
From Equation~(\ref{eq1}) and (\ref{eq2}) we obtain
\begin{equation}\label{eq:schweizer}
\dfrac{(\Delta_0+\gamma(T^q))^{q+1}}{\Delta_0^q}=\dfrac{(\Delta_1+\gamma(T))^{q+1}}{\Delta_1}.
\end{equation}
We have the factorization
\begin{multline*}\dfrac{(\Delta_0+\gamma(T^q))^{q+1}}{\Delta_0^q}-\dfrac{(\Delta_1+\gamma(T))^{q+1}}{\Delta_1}\\=\left(\Delta_0-\frac{\gamma(T^{q+1})}{\Delta_1}\right)\left(1+\frac{\gamma(T^{q^2})}{\Delta_0^q}
-\left(\Delta_1-\frac{\gamma(T^{q+1})}{\Delta_0}\right)^{q-1}\left(\frac{\Delta_1}{\Delta_0}+\frac{\gamma(T)}{\Delta_0}\right) \right).
\end{multline*}

The linear factor corresponds to the case where $\iota'$ is the dual isogeny of $\kappa$. If this is not the case, their composition will give a $T^2$-isogeny of $\psi$.

Hence the function field of $X_0(T^2)$ is given by $F_2=\mathbb F(\Delta_0,\Delta_1)$, with
$$1+\frac{\gamma(T^{q^2})}{\Delta_0^q}-\left(\Delta_1-\frac{\gamma(T^{q+1})}{\Delta_0}\right)^{q-1}\left(\frac{\Delta_1}{\Delta_0}+\frac{\gamma(T)}{\Delta_0}\right)=0.$$

Since the genus of $X_0(T^2)$ is zero, our first task is to find a generator of its function field. The element $\Theta_0=(\Delta_0\Delta_1-\gamma(T^{q+1}))/(\Delta_0+\gamma(T^q))$ is a generator of the function field $F_2$, and we find
$$\Delta_0=\Theta_0^{q-1}(\Theta_0+\gamma(T)) \makebox{ and } \Delta_1=\frac{(\Theta_0+\gamma(T))^q}{\Theta_0^{q-1}}.$$

It will be convenient to use a slightly different generator, namely $Y_0=-(\Theta_0+\gamma(T))/\gamma(T)$. In terms of $Y_0$, we find
\begin{equation}\label{eq:correspondence}
\Delta_0=-\gamma(T^q)(Y_0+1)^{q-1}Y_0 \makebox{ and } \Delta_1=-\gamma(T)\frac{Y_0^q}{(Y_0+1)^{q-1}}.
\end{equation}
The function field $F_{n+2}$, $n\ge 0$ can be generated by elements $Y_0,\dots,Y_{n}$ satisfying the equations $$-\gamma(T^q)(Y_{i}+1)^{q-1}Y_i=-\gamma(T)\frac{Y_{i-1}^q}{(Y_{i-1}+1)^{q-1}} \quad (1\le i \le n)$$ which simplifies to the equations
\begin{equation}\label{eq:unreducedElkies}
(Y_{i}+1)^{q-1}Y_i=\frac{Y_{i-1}^q}{\gamma(T^{q-1})(Y_{i-1}+1)^{q-1}} \quad (1\le i \le n).
\end{equation}
In fact we have shown the following:
\begin{theorem}
Let $\mathbb{F}=\mathbb{F}_q(\gamma(T))$. Denote for $n\ge 0$ by $F_n$ the function field of the Drinfeld modular curve $X_0(T^n)$ over $\mathbb F$. Then the tower of function fields $\mathcal F = (F_n)_{n\ge 0}$ can be given recursively as follows:
\begin{itemize}
\item $F_0=\mathbb F(j)$,
\item $F_1=\mathbb F(\Delta)$, with
$$j=\frac{\bigl(\Delta+\gamma(T)\bigr)^{q+1}}{\Delta}.$$
\item $F_2=\mathbb F(Y_0)$ with
$$\Delta=-\gamma(T^q)(Y_0+1)^{q-1}Y_0$$
and $$F_{n}=F_{n-1}(Y_{n}),\quad \text{with}\quad
(Y_{n}+1)^{q-1}Y_n=\frac{Y_{n-1}^q}{\gamma(T^{q-1})(Y_{n-1}+1)^{q-1}}$$
for $n\geq 3$.
\end{itemize}
\end{theorem}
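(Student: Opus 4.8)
The plan is to read the tower off from the moduli interpretation of the curves $X_0(T^n)$, assembling the explicit computations already made above. A point of $X_0(T^n)$ corresponds to an isomorphism class of a rank-$2$ Drinfeld module equipped with a cyclic $T^n$-isogeny, equivalently to a chain of $T$-isogenies $\varphi_0\to\varphi_1\to\cdots\to\varphi_n$ in which no step is the dual of the preceding one (so that the composite stays cyclic). The two base cases are immediate: since $j$ is a uniformizer of $X(1)$ we get $F_0=\mathbb F(j)$, and the normalization \eqref{uline} of a single $T$-isogeny exhibits $\Delta$ as a uniformizer of $X_0(T)$, so $F_1=\mathbb F(\Delta)$ together with the degree $q+1$ map $j=(\Delta+\gamma(T))^{q+1}/\Delta$ onto $F_0$.

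For the first new level I would analyze a chain of two $T$-isogenies. Normalizing the first to the form \eqref{uline} with the standard map $\kappa=\tau-1\colon\psi\to\psi'$, one computes $\psi'$ and its invariant $j_2$ as in \eqref{eq1}; normalizing the second isogeny attaches a parameter $\Delta_1$ satisfying $j_2=(\Delta_1+\gamma(T))^{q+1}/\Delta_1$ as in \eqref{eq2}. Equating the two expressions for $j_2$ yields the plane model \eqref{eq:schweizer}, which has degree $q+1$ in $\Delta_1$ over $\mathbb F(\Delta_0)$. The displayed factorization splits off a linear factor, and the key point is to identify it with the backtracking case in which the second isogeny is dual to $\kappa$ — exactly the configuration violating cyclicity. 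Discarding it leaves a factor of degree $q$, which matches the expected degree of the cover $X_0(T^2)\to X_0(T)$; hence $F_2=\mathbb F(\Delta_0,\Delta_1)$ is cut out by this nonlinear factor.

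Since $X_0(T^2)$ has genus zero I would then replace $(\Delta_0,\Delta_1)$ by one generator. The work here is to exhibit it explicitly and verify it: with $\Theta_0=(\Delta_0\Delta_1-\gamma(T^{q+1}))/(\Delta_0+\gamma(T^q))$ and $Y_0=-(\Theta_0+\gamma(T))/\gamma(T)$, a direct substitution shows $\Delta_0$ and $\Delta_1$ are given by \eqref{eq:correspondence} and that these solve the defining equation; since $\Delta_0,\Delta_1$ are rational in $Y_0$ and $Y_0$ is rational in $\Delta_0,\Delta_1$, this gives $F_2=\mathbb F(Y_0)$. The tower then follows by iteration: each consecutive pair $(\Delta_{i-1},\Delta_i)$ in a chain is again a point of $X_0(T^2)$, hence parametrized by a coordinate $Y_{i-1}$ through the same formulas \eqref{eq:correspondence}. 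The shared middle term $\Delta_i$ is the target value in the $Y_{i-1}$-parametrization and simultaneously the source value in the $Y_i$-parametrization, so equating $-\gamma(T)Y_{i-1}^q/(Y_{i-1}+1)^{q-1}$ with $-\gamma(T^q)(Y_i+1)^{q-1}Y_i$ gives exactly the recursion \eqref{eq:unreducedElkies}. An induction on $n$ then yields $F_{n+2}=\mathbb F(Y_0,\dots,Y_n)=F_{n+1}(Y_n)$ with the stated relation.

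The step I expect to require the most care is the moduli-theoretic bookkeeping underlying the iteration: making precise that the chain description really computes $X_0(T^n)$, that the linear factor of \eqref{eq:schweizer} is \emph{exactly} the dual (non-cyclic) locus while the residual degree-$q$ factor is the full function field of $X_0(T^2)$, and that every consecutive pair in a longer chain lives on a copy of this same $X_0(T^2)$. This last point is what licenses applying one and the same parametrization at every level; once it is established the passage to the infinite tower is formal.
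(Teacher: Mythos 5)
Your proposal follows essentially the same route as the paper: the theorem there is stated as a summary of the preceding derivation, which proceeds exactly as you describe --- normalizing chains of $T$-isogenies via \eqref{uline}, equating the two expressions for $j_2$ to get \eqref{eq:schweizer}, discarding the linear factor corresponding to the dual isogeny, passing to the genus-zero uniformizers $\Theta_0$ and $Y_0$, and iterating \eqref{eq:correspondence} along the chain. Your closing remarks about where the care is needed (the moduli bookkeeping and the identification of the linear factor with the non-cyclic locus) are apt, as the paper treats these points rather briskly.
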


\section{A Recurrence for the Deuring Polynomial for Drinfeld modules in Legendre Form}\label{section:three}

From now on, we assume that the $A$-characteristic is finite, i.e. $\ker \gamma = \langle p(T) \rangle$, with $\deg p(T)=d$. Let $L$ be an algebraic closure of $\mathbb F=A/\langle p(T) \rangle$. Equation \eqref{eq:correspondence} gives rise to the correspondence $L(\Delta_0)\subseteq L(Y_0)\supseteq L(\Delta_1):$
\begin{center}
\begin{minipage}{0.3\linewidth}
\xy
\xymatrix@dr{
L(X_0(T^2))= L(Y_0) \ar@{-}[rr] \ar@{-}[dd] & &L(X_0(T))=L(\Delta_1) \\
\\
L(X_0(T))=L(\Delta_0) \\
}
 \endxy
 \end{minipage}\hspace{-0.5cm}
\begin{minipage}{0.4\linewidth}
\begin{eqnarray} \label{correspondence}
 &\Delta_0&=-\gamma(T^q)(Y_0+1)^{q-1}Y_0\notag \\
 \notag \\
&&\text{and}\\
\notag \\
&\Delta_1&=-\gamma(T)\frac{Y_0^q}{(Y_0+1)^{q-1}} \notag .
\end{eqnarray}
 \end{minipage}
\end{center}

Define the field isomorphism $\iota: L(\Delta_0) \to L(\Delta_1)$ by $\iota(\Delta_1)=\Delta_0$. This map together with the correspondence $L(\Delta_0)\subseteq L(Y_0)\supseteq L(\Delta_1),$ gives rise to a directed graph $G=(V,E).$ The vertex-set $V$ is given by the set of places of $L(\Delta_0)$, while $(P,Q) \in E$ if and only if there exists a place $R$ of $L(Y_0)$ such that $R\cap L(\Delta_0)=P$ and $R\cap L(\Delta_1)=\iota(Q).$ See \cite{beelen} for details. Note that a vertex $P$ corresponds to an isomorphism class of Drinfeld modules together with a $T$-isogeny (the level structure) and that an edge $(P,Q)$ corresponds to a $T$-isogeny between the corresponding Drinfeld modules preserving the level structure.

Note that since the $A$-characteristic is finite, we can consider the set of places $V_{s} \subset V$ of $L(\Delta_0)$ corresponding to the supersingular points on $X_0(T)$. From \cite[Thm.5.9]{ge3} one obtains immediately that $|V_s|=(q^d-1)/(q-1)$.

Note that
\begin{equation}
h_{p(T)}(s)=\prod_{P \in V_s} (s-\Delta_0(P)).
\end{equation} If two Drinfeld modules $\phi$ and $\psi$ are isogenous, then $\phi$ is supersingular if and only if $\psi$ is supersingular. This means that if $(P,Q)$ is an edge  of $G$, then $P \in V_s$ if and only if $Q \in V_s$. Moreover, since in the extension $L(Y_0)/L(\Delta_0)$ only the zero and pole of $\Delta_0$ are ramified and these correspond to cusps ($j=\infty$), any $P \in V_s$ will be adjacent to exactly $[L(Y_0):L(\Delta_0)]=q$ vertices, all in $V_s$. Hence we obtain the following:

\begin{theorem} \label{thm:modularcomponent}
The set $V_s \subset V$ corresponding to the supersingular points of $X_0(T)$ gives rise to a $q$-regular component of $G$ of size $(q^d-1)/(q-1)$.
\end{theorem}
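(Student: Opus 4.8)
The plan is to establish three features of the subgraph induced on $V_s$: that it is closed under the edge relation, that it is $q$-regular, and that it has the stated size. The size is immediate, since \cite[Thm.~5.9]{ge3} gives $|V_s|=(q^d-1)/(q-1)$, so the real content is closedness and regularity.

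First I would prove closedness. An edge $(P,Q)$ of $G$ records a $T$-isogeny between the Drinfeld modules attached to $P$ and $Q$, and supersingularity is an isogeny invariant: a module is supersingular exactly when its $p(T)$-torsion vanishes, a property shared by every module in its isogeny class. Hence $P\in V_s$ if and only if $Q\in V_s$, so no edge joins $V_s$ to its complement and $V_s$ is a union of connected components of $G$. Promoting ``union of components'' to the single ``component'' of the statement requires the fact that the supersingular Drinfeld modules form one isogeny class; I would flag this as the point most dependent on an external input.

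Next I would prove $q$-regularity by analysing the ramification of the correspondence \eqref{correspondence}. The relation $\Delta_0=-\gamma(T^q)(Y_0+1)^{q-1}Y_0$ presents $L(Y_0)/L(\Delta_0)$ as a degree-$q$ extension whose branch locus I would pin down by differentiating $f(Y)=(Y+1)^{q-1}Y$: in characteristic $p$ one gets $f'(Y)=(Y+1)^{q-2}$, so ramification occurs only over $\Delta_0=0$ and $\Delta_0=\infty$, and via $j=(\Delta+\gamma(T))^{q+1}/\Delta$ both of these are cusps ($j=\infty$), hence ordinary. The analogous computation for $\Delta_1=-\gamma(T)Y_0^q/(Y_0+1)^{q-1}$ shows $L(Y_0)/L(\Delta_1)$ is likewise degree $q$ with branch locus among the cusps. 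Consequently the cover $L(Y_0)/L(\Delta_0)$ is unramified over any supersingular place $P$, and since $L$ is algebraically closed there are exactly $q$ places $R$ of $L(Y_0)$ above $P$, each of residue degree one; each contributes the neighbour $\iota^{-1}(R\cap L(\Delta_1))$, which lies in $V_s$ by closedness. This yields out-degree $q$, and the symmetric argument with $[L(Y_0):L(\Delta_1)]=q$ gives in-degree $q$.

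The main obstacle, I expect, is the bookkeeping at the boundary between supersingular points and cusps: the entire regularity count rests on the branch locus of both projections being disjoint from $V_s$, so I would take care to confirm that every ramified place maps to $j=\infty$ and that no supersingular $\lambda$-invariant coincides with a cusp. A secondary subtlety is whether the $q$ places over $P$ give $q$ genuinely distinct neighbours or whether edges must be counted with multiplicity; I would either argue distinctness or read $q$-regularity in the multigraph sense natural to the correspondence framework of \cite{beelen}.
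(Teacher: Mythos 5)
Your proposal matches the paper's proof essentially step for step: the size comes from Gekeler's count in \cite[Thm.~5.9]{ge3}, closedness from the isogeny-invariance of supersingularity, and $q$-regularity from the fact that the correspondence ramifies only over the zero and pole of $\Delta_0$, which are cusps ($j=\infty$) and hence disjoint from $V_s$. The connectedness caveat you flag is legitimate but is also left open by the paper at this stage --- connectedness of the subgraph on $V_s$ is only established later, as a remark following the proof of Theorem~\ref{thm:main}.
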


The graph on the $V_s$ is closely related to the $T$-isogeny Brandt graph on the set of supersingular $j$-invariants.

Next we will define universal polynomials $u_i(s)$ with coefficients in $A$ by a simple recursion. We will show that for any polynomial $p(T)\neq T$, the Deuring polynomial $h_{p(T)}(s)$ in $A$-characteristic $p(T)$ is obtained by reducing $u_d(s)$ modulo $p(T)$, where $d=\deg p(T)$.
The idea is to show, using nice identities satisfied by these polynomials $u_i(s)$, that the roots of the reduction of $u_d(s)$ modulo $p(T)$, also give rise to a $q$-regular component of the graph $G$ above. By a theorem from \cite{beelen}, the graph $G$ can have at most one such component and hence we obtain that $h_{p(T)}(s)$ and the reduction of $u_d(s)$ modulo $p(T)$ agree.

\begin{definition}\label{def:polsplit}
For $i\geq -1$ we define the polynomials $u_i(s)\in A[s]$  recursively by:
$$u_{-1}(s)=0  \makebox{, } u_0(s)=1$$
and
\begin{equation}\label{eq:recursion}
u_{i+1}(s)=(s+T^q)^{q^i}u_i(s)-(T^{q^i}-T)s^{q^i}u_{i-1}(s),\quad \text{for } i\ge 0.
\end{equation}
\end{definition}
%For example we have $$U_1(s)=s-1, \ U_2(s)=s^{q+1}-s^{q}/T^{q-1}-s+1,$$ and $$U_3(s)=s^{q^2+q+1}-s^{q^2+q}/T^{q-1}-s^{q^2+1}/T^{q^2-1}+s^{q^2}/T^{q^2-1}-s^{q+1}+s^{q}/T^{q-1}+s-1.$$
For any irreducible polynomial $p(T) \neq T$ we denote by $\kappa_{p(T)}$ the residue field of $p(T)$ and by $\overline{\kappa}_{p(T)}$ its algebraic closure.
Reducing $u_{i}(s)$ modulo $p(T)$, gives rise to a polynomial with coefficients in $\kappa_{p(T)}$. We will denote this polynomial by $u_i^{(p(T))}(s)$. It is easy to verify that $u_i^{(p(T))}(s)$ is monic and has degree $(q^i-1)/(q-1)$.

\begin{proposition}\label{prop:simple}
Let $d \ge 1$ be an integer and $p(T) \in \F[T]$ be a prime different from $T$ of degree $d$. All roots of the polynomial $u_d^{(p(T))}(s)$ are simple. Moreover, $0$ is not a root of $u_d^{(p(T))}(s)$.
\end{proposition}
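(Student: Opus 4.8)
The plan is to treat the two assertions separately, establishing first that $0$ is not a root, since the simplicity argument will depend on it.

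For the second assertion I would simply track the constant term. Setting $s=0$ in the recurrence \eqref{eq:recursion} annihilates the second summand, as it carries the factor $s^{q^i}$ with $i\ge 0$, leaving $u_{i+1}(0)=(T^q)^{q^i}u_i(0)=T^{q^{i+1}}u_i(0)$. With $u_0(0)=1$ this yields $u_d(0)=T^{(q^{d+1}-q)/(q-1)}$, a positive power of $T$. Since $p(T)$ is irreducible with $p(T)\neq T$, we have $p(T)\nmid T$, hence $u_d(0)\not\equiv 0\pmod{p(T)}$ and $0$ is not a root of $u_d^{(p(T))}(s)$.

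The simplicity of the roots is the substantial part. First I would use that $A[s]$ has characteristic $p$ and that every exponent $q^i$ is a power of $p$, so Frobenius turns the recurrence into the additive form
\[
u_{i+1}=(s^{q^i}+T^{q^{i+1}})\,u_i-(T^{q^i}-T)\,s^{q^i}\,u_{i-1},\qquad i\ge 0.
\]
Writing $a_i=s^{q^i}+T^{q^{i+1}}$ and $b_i=(T^{q^i}-T)s^{q^i}$, the crucial observation is that for $i\ge 1$ both coefficient polynomials are $p$-th powers in $s$, so $\partial a_i/\partial s=\partial b_i/\partial s=0$; differentiating the recurrence in $s$ then shows that the derivative sequence $(u_i')$ satisfies \emph{the same} three-term recurrence as $(u_i)$ for all $i\ge 1$, with the differing initial data $u_0'=0$, $u_1'=1$. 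Given two solutions of a common recurrence $w_{i+1}=a_iw_i-b_iw_{i-1}$, their Casoratian $C_i=u_{i+1}u_i'-u_iu_{i+1}'$ satisfies $C_i=b_iC_{i-1}$ (a one-line computation), so iterating from $C_0=u_1u_0'-u_0u_1'=-1$ gives the closed form
\[
C_{d-1}=u_d\,u_{d-1}'-u_{d-1}\,u_d'=-\prod_{j=1}^{d-1}(T^{q^j}-T)\,s^{q^j}.
\]
Now if $\alpha\in\overline{\kappa}_{p(T)}$ were a multiple root of $u_d^{(p(T))}$, then $u_d^{(p(T))}(\alpha)=0$ and, since reduction commutes with $\partial/\partial s$, also $(u_d')^{(p(T))}(\alpha)=0$; hence the reduction of $C_{d-1}$ vanishes at $\alpha$. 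But $\alpha\neq 0$ by the first part, and $T^{q^j}-T$ is the product of all monic irreducibles of degree dividing $j$, so for $1\le j\le d-1$ we get $p(T)\nmid(T^{q^j}-T)$ because $\deg p(T)=d\nmid j$. Thus the reduction of $C_{d-1}$ is a nonzero constant times $s^{\,q+\cdots+q^{d-1}}$, which cannot vanish at $\alpha\neq 0$, a contradiction; therefore $u_d^{(p(T))}$ has only simple roots.

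I expect the main obstacle to be spotting that the derivatives obey the same recurrence — a characteristic-$p$ phenomenon hinging on the exponents being $p$-powers — after which the Casoratian telescopes and the whole argument collapses to the elementary divisibility fact that $p(T)\mid T^{q^j}-T$ precisely when $d\mid j$.
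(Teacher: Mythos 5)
Your proof is correct and follows essentially the same route as the paper: both parts hinge on the same two observations, namely that the constant term $u_d(0)$ is a power of $T$ (hence a unit modulo $p(T)$), and that in characteristic $p$ the derivative sequence $(u_i')$ satisfies the recurrence \eqref{eq:recursion} for $i\ge 1$ because the coefficients $(s+T^q)^{q^i}$ and $(T^{q^i}-T)s^{q^i}$ have vanishing $s$-derivative. The only difference is in the finish: where the paper propagates the relation $\lambda\, (u_i^{(p(T))})'(\rho)+\mu\, u_i^{(p(T))}(\rho)=0$ downward by induction to force $\lambda=\mu=0$, you telescope the Casoratian $u_{i+1}u_i'-u_iu_{i+1}'$ to the closed form $-\prod_{j=1}^{d-1}(T^{q^j}-T)s^{q^j}$ and evaluate at the putative double root — this is precisely the determinant of the $2\times 2$ system underlying the paper's argument (nonzero since $\rho\neq 0$ and $d\nmid j$ gives $p(T)\nmid T^{q^j}-T$), so the two proofs are equivalent, with yours carrying the small bonus of an explicit identity.
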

\begin{proof}
It is easily seen by induction that $u_i(0)=(T)^{q(q^i-1)/(q-1)}$ for $i \ge 0$. Therefore $0$ is not a root of $u_d^{(p(T))}(s)$.

We denote by $u_i'(s)$ the derivative of $u_i(s)$ with respect to $s$. By taking the derivative on both sides of the equality sign in Equation (\ref{eq:recursion}), we see that the sequences
$$u':=(u_0'(s), u_1'(s),u_2'(s),\dots) \ \makebox{and} \ u=(u_0(s), u_1(s),u_2(s),\dots)$$ both satisfy recursion \eqref{eq:recursion}. The same holds for any linear combination $\lambda u'+\mu u$.

For convenience, we write $\alpha = T + \langle p(T) \rangle$ in $\kappa_{p(T)}$. Now suppose that $\rho \in \overline{\kappa}_{p(T)}$ is a root of $u_i^{(p(T))}(s)$ of multiplicity greater than one. Then $\rho$ is a common root of $u_d^{(p(T))}(s)$ and $\left(u_{d}^{(p(T))}\right)'(s)$. Hence we can choose $\lambda$ and $\mu$, not both zero, such that
$$\lambda \left(u_{d-1}^{(p(T))}\right)'(\rho)+\mu u_{d-1}^{(p(T))}(\rho)=0 \ \makebox{and}  \ \lambda \left(u_{d}^{(p(T))}\right)'(\rho)+\mu u_{d}^{(p(T))}(\rho)=0.$$
Note that $\alpha^{q^i} \neq 1$ for $0 \le i <d$, since $\alpha$ is a root of the irreducible polynomial $p(T)$ of degree $d$.
Hence, using equation (\ref{eq:recursion}), we see that $\lambda \left(u_{i}^{(p(T))}\right)'(\rho)+\mu u_{i}^{(p(T))}(\rho)=0$ for any $0 \le i \le d$.
In particular $$0=\lambda \left(u_0^{(p(T))}\right)'(\rho)+\mu u_{0}^{(p(T))}(\rho)=\mu \ \makebox{and} \ 0=\lambda \left(u_1^{(p(T))}\right)'(\rho)+\mu u_{1}^{(p(T))}(\rho)=\lambda+\mu(\rho+\alpha^q).$$
Hence $\lambda=\mu=0$, giving a contradiction.
\end{proof}
%\begin{remark}%\label{lem:recdepthone}
%The polynomials $u_i(s)$ also satisfy a recursion of depth one. For $i \ge 1$
%\begin{equation*}%\label{eq:recdepthone}
%u_i(s)=(-1)^{i+1}s^{\frac{q^i-1}{q-1}}u_{i-1}\left(\frac{1}{T^{q-1}s}\right)-u_{i-1}(s). %wrong??
%\end{equation*}
%\end{remark}

The following proposition contains a crucial identity.
\begin{proposition}\label{prop:keyresult}
Let $i\ge 0$ be an integer. Then
%\begin{equation*}
%p_i(s(s+1)^{q-1})-\left(T(s+1)\right)^{q^i-1}p_i\left(\frac{s^q}{(T(s+1))^{q-1}}\right)
%  =(T^{q^i-1}-1)\left(s+1\right)^{q^i-1} p_{i-1}\left(\frac{s^q}{(T(s+1))^{q-1}}\right).
%\end{equation*}
\begin{multline*}
u_i(-T^q s(s+1)^{q-1})-\left(T(s+1)\right)^{q^i-1}u_i\left(-T\frac{s^q}{(s+1)^{q-1}}\right)
 \\ =-(T^{q^i}-T)\left(T(s+1)\right)^{q^i-1} u_{i-1}\left(-T\frac{s^q}{(s+1)^{q-1}}\right).
\end{multline*}
\end{proposition}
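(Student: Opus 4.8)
The plan is to prove the identity as an equality of rational functions in $\mathbb{F}_q(s,T)$; since each term is in fact a polynomial in $A[s]$ (the factor $(T(s+1))^{q^i-1}$ clears the denominators $(s+1)^{q^i-1}$ and $(s+1)^{q^{i-1}-1}$ coming from evaluating $u_i$ and $u_{i-1}$ at the second argument), an identity of rational functions yields the stated polynomial identity. First I would abbreviate
\[
\xi=-T^q s(s+1)^{q-1},\qquad \eta=-T\frac{s^q}{(s+1)^{q-1}},\qquad \zeta=T(s+1),
\]
and set $a_i=u_i(\xi)$ and $b_i=u_i(\eta)$. With this notation the assertion becomes the single statement
\[
a_i=\zeta^{q^i-1}\bigl(b_i-(T^{q^i}-T)\,b_{i-1}\bigr)=:c_i ,\qquad i\ge 0 .
\]

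The engine of the proof will be two elementary identities in $\mathbb{F}_q(s,T)$, both immediate consequences of the Frobenius relation $(s+1)^q=s^q+1$ together with $(-1)^q=-1$:
\begin{align*}
\text{(K1)}\quad & \xi+T^q=\zeta^{q-1}(\eta+T),\\
\text{(K2)}\quad & \xi^{q}=\zeta^{q^2-1}\eta .
\end{align*}
Indeed (K1) rearranges to $1-s(s+1)^{q-1}=(s+1)^{q-1}-s^q$, which is exactly $(s+1)^q=s^q+1$, while (K2) is a direct monomial comparison. Since $x\mapsto x^{q^i}$ is a ring endomorphism of $\mathbb{F}_q(s,T)$, the twisted forms $(\xi+T^q)^{q^i}=(\zeta^{q-1})^{q^i}(\eta+T)^{q^i}$ and $\xi^{q^i}=(\zeta^{q^2-1})^{q^{i-1}}\eta^{q^{i-1}}$ come for free, and these are precisely the shapes that will occur.

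I would then prove $a_i=c_i$ by induction. Substituting $s=\xi$ (resp. $s=\eta$) into the recursion of Definition~\ref{def:polsplit} shows that $(a_i)$ and $(b_i)$ inherit it, e.g. $a_{i+1}=(\xi+T^q)^{q^i}a_i-(T^{q^i}-T)\xi^{q^i}a_{i-1}$. The base cases $i=0,1,2$ are checked directly: $a_0=c_0=1$ and $a_1=c_1$ follow at once from (K1), while $a_2=c_2$ is a short expansion in which, after pulling out $\zeta^{q^2-1}$ via (K1) and (K2), both sides reduce to $\eta^{q+1}+T\eta^q+T\eta+T^{q+1}$. For the inductive step ($i\ge 2$) I would substitute the hypotheses $a_i=c_i$, $a_{i-1}=c_{i-1}$ into the $a$-recursion, expand $c_{i+1}$ through the $b$-recursion, and use the $b$-recursion once more to eliminate $b_{i-2}$ in favour of $b_i$ and $b_{i-1}$ (legitimate for $i\ge2$, where $(T^{q^{i-1}}-T)\eta^{q^{i-1}}\neq0$). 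Writing each side as an $\mathbb{F}_q(s,T)$-combination $\alpha\,b_i+\beta\,b_{i-1}$, the desired equality follows once the two $\alpha$'s and the two $\beta$'s are shown to coincide; after dividing by $\zeta^{q^i-1}$ and feeding in the $q^i$- and $q^{i-1}$-twists of (K1) and (K2), both coefficient identities collapse to the single relation $(\zeta^{q-1})^{q^i+q^{i-1}}\eta^{q^{i-1}}=\xi^{q^i}$, which is just the $q^{i-1}$-th Frobenius power of (K2).

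The main obstacle is bookkeeping rather than ideas: one must track the three Frobenius exponents $q^{i-1},q^i,q^{i+1}$ attached to $T$, $\xi$, $\eta$, $\zeta$ and check that they reorganize exactly into twists of (K1) and (K2). The one genuinely delicate point is that the reduction to coefficient identities is \emph{not} valid at the bottom of the induction: for $i=1$ the term $b_{i-2}=b_{-1}=0$ cannot be removed through the degenerate $b$-recursion, and there the two coefficient identities in fact fail individually even though $a_2=c_2$ still holds (because $b_0$ and $b_1$ are linearly dependent over $\mathbb{F}_q(s,T)$). This is exactly why $a_2=c_2$ must be established by the direct base-case computation above rather than produced by the general step.
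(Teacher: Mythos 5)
Your proposal is correct and takes essentially the same route as the paper's own (largely omitted) computation: induction on $i$, applying the defining recursion once at the argument $-T^qs(s+1)^{q-1}$ and once more at $-Ts^q/(s+1)^{q-1}$ to remove the stray lower-index term, with everything driven by the two Frobenius identities you isolate as (K1) and (K2). Your extra base case $i=2$, needed because your particular elimination of $b_{i-2}$ divides by $(T^{q^{i-1}}-T)\eta^{q^{i-1}}$, is a sound and carefully justified way of handling the degenerate bottom step.
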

\begin{proof}
We prove the proposition by induction on $i$. The proof only uses elementary computations which are somewhat tedious and therefore omitted. First of all note that for $i=0$ and $i=1$, the proposition is trivial.
%Since $p_1(s)=s-1$, we find for $i=1$:
%\begin{multline*}
%\begin{split}
%p_1(s(s+1)^{q-1})-\left(T (s+1)\right)^{q-1}p_1\left(\frac{s^q}{(T(s+1))^{q-1}}\right)
%&=(t s)^{q-1}-1-t^{q(q-1)}+(T s)^{q-1}=(t s)^{q-1}- s^q+(T s)^{q-1}\\
%&=s^{q-1}\left(t^{q-1}-s+T^{q-1}\right)=s^{q-1}\left(T^{q-1}-1\right),
%\end{split}
%\end{multline*}
%which proves the theorem for $i=1$, since $p_0(t)=1$.
Now suppose that $i \ge 1$ and that the proposition is true for $i$ and $i-1$. Then writing $D_0=-T^qs(s+1)^{q-1}$ and $D_1=-Ts^q/(s+1)^{q-1}$ we have by equation~\eqref{eq:recursion}
\begin{align*}
u_{i+1}(D_0) & = (-T^q s(s+1)^{q-1}+T^q)^{q^i} u_i(D_0)-(T^{q^i}-T)(-T^q s(s+1)^{q-1})^{q^i}u_{i-1}(D_0)
\end{align*}
Using the induction hypothesis, we can express this in the form
\begin{align}\label{eq:intermediate}
u_{i+1}(D_0) & = f_i \, u_{i}\left(D_1\right)+f_{i-1}  \, u_{i-1}\left(D_1\right)+f_{i-2}  \, u_{i-2}\left(D_1\right),
\end{align}
for certain expressions $f_i,f_{i-1},f_{i-2} \in A[s].$
%with
%\begin{align*}
%f_i = &  -T^{q^{i+1}}\left( s(s+1)^{q-1}-1 \right)^{q^i}\left(T (s+1)\right)^{q^i-1} \\
%f_{i-1} = & T^{q^{i+1}}( s(s+1)^{q-1}-1 )^{q^i}(T^{q^i}-T)(T(s+1))^{q^i-1} + \\
%  & \hspace{25ex} (T^{q^i}-T)T^{q^{i+1}}(s(s+1)^{q-1})^{q^i}(T(s+1))^{q^{i-1}-1}\\
%f_{i-2} = & -(T^{q^i}-T)T^{q^{i+1}}(s(s+1)^{q-1})^{q^i}(T^{q^{i-1}}-T)(T(s+1))^{q^{i-1}-1}.
%\end{align*}
%In the first equality we used equation (\ref{eq:recursion}), in the second equality the induction hypothesis.
Now using equation (\ref{eq:recursion}), with the variable $s$ replaced by $D_1=-Ts^q/(s+1)^{q-1}$, we can express $u_{i-2}(D_1)$ and $u_{i-1}(D_1)$ in terms of $u_{i}(D_1)$ and $u_{i+1}(D_1)$ in equation (\ref{eq:intermediate}). The resulting expression turns out to be exactly what we needed to complete the induction step.
%and obtain an expression of the form
% as %something wrong here
%\begin{align}\label{eq:intermediate2}
%u_{i+1}(D_0)= & \left(f_i+(T^{q^i-1}-1)(s+1)^{q^{i+1}-1}\right)u_i\left(D_1\right)\notag\\
%&+(s+1)^{q^i-1}s^{q^{i+1}}(T^{q^i-1}-1)u_{i-1}\left(D_1\right)
%\end{align}
%Then using equation (\ref{eq:recursion}) to express $u_{i-1}(D_1)$ in terms of $u_{i}(D_1)$ and $u_{i+1}(D_1)$ on the right hand side of equation (\ref{eq:intermediate2}), the theorem follows.
\end{proof}

\begin{theorem}\label{thm:main} For an irreducible polynomial $p(T)\in A$ with $\deg p(T)=d$ and $p(T)\neq T$, the Deuring polynomial  $h_{p(T)}$ can be obtained as
$$h_{p(T)}(s)=u_d^{(p(T))}(s).$$
\end{theorem}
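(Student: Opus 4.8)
The plan is to realize the root set of $u_d^{(p(T))}(s)$ as a $q$-regular component of the correspondence graph $G$ and then invoke the uniqueness statement of \cite{beelen}, exactly as foreshadowed before Definition~\ref{def:polsplit}. Write $W\subseteq V$ for the set of places $P$ of $L(\Delta_0)$ at which $\Delta_0(P)$ is a root of $u_d^{(p(T))}(s)$. By Proposition~\ref{prop:simple} the $(q^d-1)/(q-1)$ roots of $u_d^{(p(T))}$ are simple and nonzero, so $|W|=(q^d-1)/(q-1)=|V_s|$ and $W$ avoids the cusps $\Delta_0\in\{0,\infty\}$. Since $h_{p(T)}(s)=\prod_{P\in V_s}(s-\Delta_0(P))$ and $u_d^{(p(T))}(s)=\prod_{P\in W}(s-\Delta_0(P))$ are both monic of the same degree with simple roots, it suffices to prove $W=V_s$.

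The heart of the argument is to reduce the identity of Proposition~\ref{prop:keyresult} modulo $p(T)$ at $i=d$. Writing $\alpha=T+\langle p(T)\rangle$, the residue field $\kappa_{p(T)}$ equals $\mathbb{F}_{q^d}$, so $\alpha^{q^d}=\alpha$ and the factor $(T^{q^d}-T)$ on the right-hand side reduces to $0$. Substituting $s=Y_0$ and recognizing $-\alpha^q Y_0(Y_0+1)^{q-1}=\Delta_0$ and $-\alpha\, Y_0^q/(Y_0+1)^{q-1}=\Delta_1$ from \eqref{eq:correspondence}, the identity collapses to the clean relation $u_d^{(p(T))}(\Delta_0)=\bigl(\alpha(Y_0+1)\bigr)^{q^d-1}\,u_d^{(p(T))}(\Delta_1)$ inside the function field $L(Y_0)$. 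At any place $R$ of $L(Y_0)$ with $Y_0(R)\notin\{-1,0,\infty\}$ the factor $\bigl(\alpha(Y_0+1)\bigr)^{q^d-1}$ is finite and nonzero, so $\Delta_0(R)$ is a root of $u_d^{(p(T))}$ if and only if $\Delta_1(R)$ is.

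I would then translate this biconditional into the edge structure of $G$. An edge $(P,Q)$ arises from a place $R$ of $L(Y_0)$ with $\Delta_0(R)=\Delta_0(P)$ and $\Delta_1(R)=\Delta_0(Q)$ under the identification $\iota$; since a root $\rho$ of $u_d^{(p(T))}$ is finite and nonzero, the associated $R$ has $Y_0(R)\notin\{-1,0,\infty\}$, so the displayed relation applies in both directions. Using that both projections $L(Y_0)/L(\Delta_0)$ and $L(Y_0)/L(\Delta_1)$ have degree $q$ and ramify only over the cusps $\Delta_i\in\{0,\infty\}$, which $W$ avoids, every $P\in W$ has exactly $q$ outgoing and $q$ incoming edges, all of whose endpoints again lie in $W$. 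Hence no edge joins $W$ to its complement, the induced subgraph on $W$ is $q$-regular, and in particular each connected component of $G$ contained in $W$ is itself a $q$-regular component.

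Finally I would invoke the result of \cite{beelen} that $G$ possesses at most one $q$-regular component. Since $V_s$ is such a component by Theorem~\ref{thm:modularcomponent}, and $W$ is a disjoint union of $q$-regular components, this forces $W$ to consist of the single component $V_s$; therefore $W=V_s$ and $h_{p(T)}(s)=u_d^{(p(T))}(s)$. I expect the main obstacle to be the graph-theoretic bookkeeping in the third step: one must confirm that the exceptional places $Y_0\in\{-1,0,\infty\}$, which correspond to cusps, never meet $W$, and that the only ramification in \emph{both} projections sits over these cusps (the statement for $L(Y_0)/L(\Delta_0)$ is recorded in the text, and the one for $L(Y_0)/L(\Delta_1)$ must be checked analogously), so that the degree count gives an exact $q$-regularity rather than a mere inequality. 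By contrast, the reduction of the key identity is immediate once $\alpha^{q^d}=\alpha$ is used, and the final matching of polynomials is formal given $W=V_s$.
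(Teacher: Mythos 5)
Your proposal is correct and follows essentially the same route as the paper's own proof: reduce the identity of Proposition~\ref{prop:keyresult} at $i=d$ modulo $p(T)$ to show the root set of $u_d^{(p(T))}$ is closed under the edge relation of $G$, use unramifiedness away from the cusps for $q$-regularity, and conclude via the uniqueness of the $q$-regular finite component from \cite{beelen}. If anything, you are slightly more explicit than the paper about why $W$ must be a \emph{single} component and about checking ramification in both projections, but the argument is the same.
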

\begin{proof}
Consider the correspondence \eqref{correspondence} and the associated graph $G$. By Theorem~\ref{thm:modularcomponent}, the set $V_S$ of places corresponding to supersingular points on $X_0(T)$ form a $q$-regular finite component of $G$.

Let $W_s$ be the set of places of $L(\Delta_0)$ corresponding to the roots $u_d^{(p(T))}(\Delta_0)$. We claim that set $W_S$ also forms a $q$-regular finite component of $G$. Finiteness is clear. Let $P\in W_s$ and $\xi=\Delta_0(P)$ be the corresponding root of $u_d^{(p(T))}(\Delta_0)$. Let $(P,Q)$ be an edge in $G$ and $\eta=\Delta_0(Q)$. Then there exists an $a$ in $L$ with
\begin{equation} \label{xieta} \xi=-\gamma(T^q)(a+1)^{q-1}a\quad \text{and}\quad \eta=-\gamma(T)\frac{a^q}{(a+1)^{q-1}}.\end{equation}
Reducing modulo $p(T)$ the identity in Proposition~\ref{prop:keyresult} for $i=d$, and using that $p(T) | T^{q^d}-T$, we obtain
$$u_i^{(p(T))}(-\gamma(T^q) s(s+1)^{q-1})-\left(\gamma(T)(s+1)\right)^{q^i-1}u_i^{(p(T))}\left(-\gamma(T)\frac{s^q}{(s+1)^{q-1}}\right)=0.$$
Substituting $s=a$ and using \eqref{xieta} we get
\begin{multline*}
u_i^{(p(T))}(\xi)=u_i^{(p(T))}(-\gamma(T^q) a(a+1)^{q-1})\\=\left(\gamma(T)(a+1)\right)^{q^i-1}u_i^{(p(T))}\left(-\gamma(T)\frac{a^q}{(a+1)^{q-1}}\right) =\left(\gamma(T)(a+1)\right)^{q^i-1}u_i^{(p(T))}(\eta).
\end{multline*}
Since $a\neq -1$ (otherwise $\xi=0$, contradicting Proposition~\ref{prop:simple}), we have $u_i^{(p(T))}(\eta)=0$, hence $Q$ is in $W_s$. Similarly, $(P,Q)$ is an edge in $G$ and $Q\in W_s$ implies $P\in W_s$.
To see the $q$-regularity of $W_s$ just note that the places in $W_s$ are not ramified in the extensions of the correspondence.

Finally, $V_s=W_s$, by \cite[Corollary 4.7]{beelen}, since the graph $G$ can have at most one $q$-regular finite component.
Both $h_{p(T)}(s)$ and $u_d^{(p(T))}(s)$ are monic, hence the result follows.
\end{proof}
\begin{remark} Note that the proof of Theorem~\ref{thm:main} also shows that the subgraph on supersingular points $V_s$ is connected.
\end{remark}

Using \eqref{deuringrelation}, the Deuring Polynomial $H_{p(T)}(s)$ will be given by
$$H_{p(T)}(s):=\Bigl(\dfrac{(s^q-s)^{q-1}}{\gamma(T^q)}\Bigr)^{\frac{q^d-1}{q-1}}u_d^{(p(T))}\Bigl(\frac{\gamma(T)}{(s^q-s)^{q-1}}\Bigr).$$
Alternatively, by a change of variables, we can obtain recurrence relations for $H_{p(T)}(s)$ directly :

\begin{corollary}
Define $U_{-1}(s)=0  \makebox{, } U_0(s)=1$ and for $i\geq 0$
$$U_{i+1}(s)=\bigl((s^q-s)^{q-1}+\frac{1}{T^{q-1}}\bigr)^{q^i}U_i(s)-\frac{T^{q^i}-T}{T^{q^{i+1}}}(s^q-s)^{(q-1)q^{i-1}}U_{i-1}(s).$$
Let $p(T)\neq T$ be an irreducible polynomial in $A$ with $\deg p(T)=d$. Then
$$H_{p(T)}(s)=U_d^{(p(T))}(s),$$
where $U_d^{(p(T))}(s)$ denotes the reduction of $U_d(s)$ modulo $p(T)$.
\end{corollary}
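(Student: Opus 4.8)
The plan is to realize each $U_i$ as an explicit change of variables applied to the polynomials $u_i$ of Definition~\ref{def:polsplit}, and then to invoke the formula for $H_{p(T)}(s)$ displayed immediately above the corollary. Concretely, I would set, as elements of $\mathbb{F}_q[T,1/T][s]$,
$$U_i(s) := \Bigl(\frac{(s^q-s)^{q-1}}{T^q}\Bigr)^{\frac{q^i-1}{q-1}} u_i\Bigl(\frac{T}{(s^q-s)^{q-1}}\Bigr), \qquad i\ge -1,$$
and first check that this is genuinely a polynomial in $s$: since $u_i$ has degree $(q^i-1)/(q-1)$, the prefactor exactly cancels the denominators $(s^q-s)^{(q-1)k}$ arising from the monomials of $u_i\bigl(T/(s^q-s)^{q-1}\bigr)$, leaving a polynomial in $s$ with coefficients in $\mathbb{F}_q[T,1/T]$. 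Because $p(T)\neq T$, the element $T$ is a unit modulo $p(T)$, so reduction modulo $p(T)$ is well defined on these $U_i$.

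The heart of the argument is to verify that these $U_i$ satisfy the stated recurrence; then, since $U_{-1}=0$ (as $u_{-1}=0$) and $U_0=1$ reproduce the initial data, the $U_i$ defined by the substitution coincide with the $U_i$ of the statement. Writing $P=(s^q-s)^{q-1}$ and $v=T/P$, the two clean identities that drive the verification are
$$\frac{P}{T^q}\,(v+T^q) = (s^q-s)^{q-1} + \frac{1}{T^{q-1}} \qquad\text{and}\qquad \Bigl(\frac{P}{T^q}\Bigr)^{q^i+q^{i-1}} v^{q^i} = \frac{(s^q-s)^{(q-1)q^{i-1}}}{T^{q^{i+1}}}.$$
Substituting the recurrence \eqref{eq:recursion} for $u_{i+1}(v)$ into the definition of $U_{i+1}(s)$, and using $E_{i+1}-E_i=q^i$ and $E_{i+1}-E_{i-1}=q^i+q^{i-1}$ for $E_i:=(q^i-1)/(q-1)$, the coefficient of $u_i(v)$ telescopes via the first identity into $\bigl((s^q-s)^{q-1}+T^{-(q-1)}\bigr)^{q^i}$ times $U_i(s)$, while the coefficient of $u_{i-1}(v)$ telescopes via the second identity into $-\frac{T^{q^i}-T}{T^{q^{i+1}}}(s^q-s)^{(q-1)q^{i-1}}$ times $U_{i-1}(s)$. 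This is precisely the asserted recurrence.

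Finally, I would pass to the residue field. Reducing the defining substitution modulo $p(T)$ and using that reduction is a ring homomorphism sending $T\mapsto\gamma(T)$, and hence $T^q\mapsto\gamma(T)^q=\gamma(T^q)$, one obtains
$$U_d^{(p(T))}(s) = \Bigl(\frac{(s^q-s)^{q-1}}{\gamma(T^q)}\Bigr)^{\frac{q^d-1}{q-1}} u_d^{(p(T))}\Bigl(\frac{\gamma(T)}{(s^q-s)^{q-1}}\Bigr),$$
which is exactly the displayed expression for $H_{p(T)}(s)$ preceding the corollary. Hence $H_{p(T)}(s)=U_d^{(p(T))}(s)$.

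I expect the main obstacle to be purely bookkeeping: correctly tracking the exponents $E_i$ and confirming that the prefactors telescope to produce exactly the coefficients $\bigl((s^q-s)^{q-1}+T^{-(q-1)}\bigr)^{q^i}$ and $\frac{T^{q^i}-T}{T^{q^{i+1}}}(s^q-s)^{(q-1)q^{i-1}}$. There is no conceptual difficulty once the correct substitution is guessed; with the two displayed identities in hand the recurrence falls out mechanically, and the reduction step is immediate because $p(T)\neq T$ makes $T$ invertible modulo $p(T)$.
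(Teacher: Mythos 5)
Your proposal is correct and is exactly the route the paper intends: the paper derives the corollary from Theorem~\ref{thm:main} and \eqref{deuringrelation} "by a change of variables" without spelling it out, and your substitution $U_i(s)=\bigl((s^q-s)^{q-1}/T^q\bigr)^{(q^i-1)/(q-1)}u_i\bigl(T/(s^q-s)^{q-1}\bigr)$ together with the two telescoping identities (both of which check out) supplies precisely the omitted computation. The reduction step, using that $T$ is a unit modulo $p(T)\neq T$, is also handled correctly.
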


\bibliographystyle{99}

\begin{thebibliography}{A}

\bibitem{unred} A. Bassa, P. Beelen, \textit{Explicit equations for Drinfeld modular towers}, arXiv:1110.6076v1 [math.NT], 2011.
%\bibitem{abhyankhar}S.S.~Abhyankar, \textit{Nice equations for nice groups}, Israel J. Math. 88, pp.~1--23, 1994.

%\bibitem{abhyankhar2}S.S.~Abhyankar, \textit{Projective Polynomials}, Proceedings of the AMS 125 (6), pp.~1643--1650, 1997.

\bibitem{beelen} P.~Beelen, \textit{Graphs and recursively defined towers of function fields}, Journal of Number Theory 108, 217--240, 2004.

%\bibitem{bezerragarcia} J.~Bezerra, A.~Garcia, \textit{A tower with non-Galois steps which attains the Drinfeld-Vladut bound}, Journal of Number Theory 106 (1), pp.~142-154, 2004.

%\bibitem{magma} W.~Bosma, J.~Cannon, and C.~Playoust, The Magma algebra system. I. The user language, Journal of Symbolic Comput. 24, pp.~235-265, 1997.

%\bibitem{dv} V.G.~Drinfeld and S.G.~Vladut, \textit{The number of points of an algebraic curve} (translated from the Russian paper in Funktsional. Anal. i Prilozhen), Functional Anal. Appl. 17, pp.~53-54, 1983.


%\bibitem{elkies} N.D.~Elkies, \textit{Explicit modular towers}, in Proc. 35th Ann. Allerton Conf. on Communication, Control and Computing, Urbana, IL, pp.~23--32, 1997.

\bibitem{cor1} G. Cornelissen, \textit{Deligne's congruence and supersingular reduction of Drinfeld modules}, Arch. Math. 72, 346--353, 1999.

\bibitem{cor2} G. Cornelissen, \textit{Zeros of Eisenstein series, quadratic class numbers and supersingularity for rational function fields}, Math. Ann. 314,175--196, 1999.

\bibitem{elgpapa} A. El-Guindy, M. A. Papanikolas, \textit{Explicit formulas for Drinfeld modules and their periods}, J. Number Theory 133, no. 6, 1864--1886, 2013.

\bibitem{elguindy} A. El-Guindy, \textit{Legendre Drinfeld modules and universal supersingular polynomials}, Int. J. Number Theory 10, no. 5, 1277--1289, 2014.

\bibitem{elkiesd} N. D.~Elkies, \textit{Explicit towers of Drinfeld modular curves}, Progress in Mathematics 202, 189--198, 2001.

%\bibitem{GSINV} A.~Garcia and H.~Stichtenoth, \textit{A tower of {A}rtin--{S}chreier extensions of function fields attaining the {D}rinfeld--{V}ladut bound}, Invent.~Math.~121, pp.~211--222, 1995.


%\bibitem{GSJNT} A.~Garcia and H.~Stichtenoth, \textit{On the asymptotic behaviour of some towers of function fields over finite fields}, Journal of Number Theory 61, pp.~248-273, 1996.

%\bibitem{tame} A.~Garcia, H.~Stichtenoth and H.~R\"uck, \textit{On tame towers over finite fields}, J. Reine Angew. Math. 557, pp.~53--80,  2003.

%\bibitem{GSMMJ} A.~Garcia and H.~Stichtenoth, \textit{Some Artin--Schreier towers are easy}, Mosc. Math. J. 5, pp.~767--774, 2005.

%\bibitem{ihara} Y.~Ihara, \textit{Some remarks on the number of rational points of algebraic curves over finite fields}, J. Fac. Sci. Tokyo 28, pp.~721-724, 1981.

\bibitem{ge3} E.-U.~Gekeler, \textit{Zur Arithmetik von Drinfeld-Moduln}, Math. Ann. 262, 167--182, 1983.

\bibitem{ge4} E.-U.~Gekeler, \textit{On the coefficients of Drinfelʹd modular forms}, Invent. Math. 93, no. 3, 667--700, 1988.

\bibitem{ge1} E.-U.~Gekeler, \textit{Invariants of some algebraic curves related to Drinfeld modular curves}, Journal of Number Theory 90, 166-183, 2001.

\bibitem{ge5} E.-U.~Gekeler, \textit{Para-Eisenstein series for the modular group $\rm{GL}(2,\mathbb F_q[T])$}, Taiwanese J. Math. 15, no. 4, 1463--1475, 2011.

%\bibitem{ge2} E.-U.~Gekeler, \textit{Asymptotically optimal towers of curves over finite fields}, Algebra, arithmetic and geometry with applications (West Lafayette, IN, 2000), pp.~325-336, Springer, Berlin, 2004.

%\bibitem{gekeler4} E.-U.~Gekeler, \textit{Asymptotically optimal towers of curves over finite fields}, in Proc. Conference on Algebra and Algebraic Geometry (Abhyankar 70), Springer Verlag, 2000.


%\bibitem{manin1} Yu.I.~Manin, \textit{What is the maximum number of points on a curve over $\FF_2$?}, J. Fac. Sci. Univ. Tokyo Sect. IA Math. 28, pp.~715--720, 1981.

%\bibitem{xingnied} H.~Niederreiter and C.P.~Xing, \textit{Rational points on curves over finite fields}, London Math. Soc. Lecture Notes Ser. 285, Cambridge Univ. Press, Cambridge, 2001.


%\bibitem{schweizer2} A.~Schweizer, \textit{On Drinfeld Modular Curves with Many Rational Points over Finite Fields}, Finite Fields and Their Applications 8, pp.~434443, 2002.

\bibitem{schweizer} A.~Schweizer, \textit{On the Drinfeld Modular Polynomial $\Phi_T(X,Y)$}, Journal of Number Theory 52, 53--68, 1995.

\bibitem{schweizer2} A.~Schweizer, \textit{On singular and supersingular invariants of Drinfeld modules}, Ann. Fac. Sci. Toulouse Math. (6) 6, no. 2, 319--334, 1997.

%\bibitem{schweizer3} A.~Schweizer, \textit{Hyperelliptic Drinfeld modular curves}, In: Drinfeld modules, modular schemes and applications (ed.~Gekeler, et al.), pp.~330-343, World Sci. Publ., 1997.

%\bibitem{serre}J.-P.~Serre, \textit{Sur le nombre des points rationnels dune courbe alg{\'e}brique sur un corps fini}, C. R. Acad. Sci. Paris 296, pp.~397--402, 1983.

\bibitem{sil} J. H.~Silverman, \textit{The arithmetic of elliptic curves}, Graduate Texts in Mathematics, 106, Springer-Verlag, New York, 1986.

%\bibitem{genDV} M.A.~Tsfasman, S.G.~Vladut, \textit{Infinite global fields and the generalized Brauer-Siegel theorem}, Moscow Mathematical Journal 2, no.~2, pp.~329-402, 2002.

%\bibitem{TVZ} M.A.~Tsfasman, S.G.~Vladut and  T.~Zink, \textit{Modular curves, Shimura curves and Goppa codes, better than the Varshamov--Gilbert bound}, Math.~Nachr.~109, pp.~21--28, 1982.

%\bibitem{manin} S.G.~Vladut and Yu.I.~Manin, \textit{Linear codes and modular curves}, J.Sov.Math. 30, pp.~2611--2643, 1985.

\end{thebibliography}

\vspace{3ex}

\noindent
Alp Bassa\\

Bo\u{g}azi\c{c}i University,

Faculty of Arts and Sciences,

Department of Mathematics,

34342 Bebek, \.{I}stanbul,

Turkey

alp.bassa@boun.edu.tr

\vspace{3ex}

\noindent
Peter Beelen\\

Technical University of Denmark,

Department of Applied Mathematics and Computer Science,

Matematiktorvet 303B, 2800 Kgs. Lyngby,

Denmark

pabe@dtu.dk

\end{document}